\newtheorem{thm}{Theorem}[section]
\newtheorem{lemma}{Lemma}[section]
\newtheorem{cor}{Corollary}[section]
\newtheorem{defn}{Definition}[section]
\newtheorem{prop}{Proposition}[section]
\newcommand{\C}{\mathbb{C}}
\newcommand{\Z}{\mathbb{Z}}
\newcommand{\Ha}{\mathcal{H}}
\newcommand{\g}{\mathfrak{g}}
\newcommand{\gl}{\mathfrak{gl}}
\newcommand{\p}{\mathfrak{p}}
\newcommand{\chr}{\text{char}}
\newcommand{\GL}{\textbf{GL}}
\newcommand{\Or}{\textbf{O}}
\newcommand{\Par}{\mathcal{P}}
\newcommand{\wt}{\text{wt}}
\newcommand{\CLR}{\text{CLR}}
\newcommand{\Hom}{\text{Hom}}
\title{Stable Graded Multiplicities for Harmonics on a Cyclic Quiver}
\author{Andrew Frohmader and Alexander Heaton}
\begin{document}

\maketitle

\begin{abstract}
    We consider Vinberg $\theta$-groups associated to a cyclic quiver on $k$ nodes. Let $K$ be the product of the general linear groups associated to each node. Then $K$ acts naturally on $\oplus \text{Hom}(V_i, V_{i+1})$ and by Vinberg's theory the polynomials are free over the invariants. We therefore consider the harmonics as a representation of $K$, and give a combinatorial formula for the stable graded multiplicity of each $K$-type. A key lemma provides a combinatorial separation of variables that allows us to cancel the invariants and obtain generalized exponents for the harmonics.
\end{abstract}

\section{Introduction}

Consider the representations of a cyclic quiver on $k$ nodes. Associate to each node a finite-dimensional vector space $V_j$, and to each arrow the space of linear transformations, $\mbox{Hom}(V_j, V_{j+1})$.  Set $V = V_1 \oplus \cdots \oplus V_k$ and let $K$ be the block diagonal subgroup of $G=\GL(V)$ isomorphic to $\GL(V_1) \times \cdots \times \GL(V_k)$ acting on
\begin{equation*}
    \mathfrak p = \mbox{Hom}(V_1,V_2) \oplus \mbox{Hom}(V_2,V_3) \oplus \cdots \oplus \mbox{Hom}(V_{k-1}, V_k) \oplus \mbox{Hom}(V_k, V_1).
\end{equation*}
Here we let $\GL(U) \times \GL(W)$ act on $\mbox{Hom}(U,W)$ by $(g_1,g_2) \cdot T  = g_2 \circ T \circ g_1^{-1}$, as usual. For $(T_1, \dots, T_k) \in \mathfrak p$, we have $K$-invariant functions defined by
\begin{equation*}
    \text{Trace}\left[ (T_1 \circ \cdots \circ T_k)^p \right]
\end{equation*}
for $1 \leq p \leq n = \min \{\dim V_j \}$. By a result of Le Bruyn and Procesi \cite{LeBryunProcesi1990}, these generate the $K$-invariant functions on $\mathfrak p$. The harmonic polynomials $\Ha$ are defined as the common kernel of all non-constant, $K$-invariant, constant-coefficient differential operators on $\p$. 

The harmonics are naturally graded by degree and we may encode the decomposition of $\Ha$ into $K$-irreducible representations by the $q$-graded character $\text{char}_q(\Ha)$, which places the character of the degree $d$ invariant subspace as the coefficient of $q^d$. If $s_K^\lambda$ is the irreducible character associated to the $K$-type $\lambda$, we may expand
\begin{equation*}
    \text{char}_q(\Ha) = \sum_\lambda m_\lambda^{(G,K)}(q) \, s_K^\lambda.
\end{equation*}
Fix the $K$-type $\nu$. Our main result is a combinatorial formula for $m_\nu^\infty(q,k)$, the stable multiplicity of $\nu$ in the harmonics on a cyclic quiver of length $k$. We will see that, for any cyclic quiver, $m_\nu^\infty(q,k)$ is equal to $m_\nu^{(G,K)}(q)$ up to degree $\leq n = \min \{ \dim V_j \}$ and our main Theorem \ref{thm:main} will prove
$$m_\nu^\infty(q,k) = \sum_{T \in D(\nu)} q^{\sum_{i=1}^k |\lambda_i(T)|}.$$
We sum over a certain set of \textit{distinguished tableau} $T \in \mathcal{D}(\nu)$, and the function $\lambda_i(T)$ is computed from simple combinatorial data associated to $T$. The proof consists of several steps. First, we realize the cyclic quiver above as a $\theta$-representation, or Vinberg pair $(G,K)$, with $K$ the fixed points of a finite order automorphism of $G$. A key lemma finds a combinatorial \textit{separation of variables}, mirroring Vinberg's theorem \cite{Vinberg1976} that
\begin{equation*}
    \C[\g_1] = \C[\g_1]^K \otimes \Ha.
\end{equation*}
Our combinatorial separation of variables allows us to cancel the invariants combinatorially. Other steps include constructing an action of a larger group and then restricting to $K$, applying a branching rule involving Littlewood-Richardson coefficients, and using the combinatorics of $\gl_\infty$ crystals to translate the $c^\lambda_{\mu,\nu}$ into tableau.

\section{Background}

\subsection{Vinberg Pairs}
Let $G$ be a connected reductive algebraic group over $\C$, and let $\theta:G\to G$ be an automorphism of $G$ with finite order $k$, so $\theta^k = \text{id}$. The group of fixed points $K = G^\theta$ acts on $\mathfrak{g}$ by restriction of the Adjoint representation. Each eigenspace of $d\theta$ is invariant. The Lie algebra splits into eigenspaces
\begin{equation*}
    \mathfrak{g} = \mathfrak{g}_0 \oplus \mathfrak{g}_1 \oplus \cdots \oplus \mathfrak{g}_{k-1}.
\end{equation*}
In \cite{Vinberg1976}, Vinberg studied the representation of $K$ on the polynomial functions on an eigenspace, and proved the following separation of variables:
\begin{equation*}
    \C[\mathfrak{g}_1] = \C[\mathfrak{g}_1]^K \otimes \mathcal{H},
\end{equation*}
where $\C[\mathfrak{g}_1]^K$ are the $K$-invariant functions and $\mathcal{H}$ are the harmonic polynomials. In general, for any representation of $K$ on $V$ the harmonics are defined as the common kernel for all invariant, non-constant, constant-coefficient differential operators $\mathcal{D}(V)^K$:
\begin{equation*}
    \mathcal{H} = \{ \, f \in \C[V] \, : \, \partial f = 0 \text{ for all non-constant } \partial \in \mathcal{D}(V)^K \}.
\end{equation*}

Note that with $k=1$, Vinberg's results recover those of Kostant's paper, \textit{Lie Group Representations on Polynomial Rings} \cite{Kostant1963}. There, Kostant proved the separation of variables
\begin{equation*}
    \C[\mathfrak{g}] = \C[\mathfrak{g}]^G \otimes \mathcal{H},
\end{equation*}
where $G$ acts on its Lie algebra under the Adjoint representation, $\C[\mathfrak{g}]^G$ are the invariants, and $\mathcal{H}$ are the harmonics. 

The harmonics are naturally graded by degree, and we may encode the decomposition of $\mathcal{H}$ into $G$-irreducible representations by the $q$-graded character $\text{char}_q(\mathcal{H})$, which places the character of the degree-$d$ invariant subspace as the coefficient of $q^d$. If $s^\lambda_G$ denotes the character of the $G$-irreducible representation parametrized by $\lambda$, then we have
\begin{equation*}
    \text{char}_q(\mathcal{H}) = \sum_\lambda \mathcal{K}^G_{\lambda,0}(q) \, s^\lambda_G.
\end{equation*}
In the Kostant setting, the polynomials $\mathcal{K}^G_{\lambda,0}(q)$ are called \textit{generalized exponents} of $G$ and coincide with the Lusztig $q$-analogues associated to the zero weight subspaces, by a theorem of Hesselink \cite{Hesselink1980}. Thus,
\begin{equation*}
    \mathcal{K}_{\lambda,0}^G(q) = \sum_{w \in W} (-1)^{l(w)}P_q(w(\lambda + \rho)-\rho),
\end{equation*}
where $\rho$ is the half sum of positive roots, $W$ is the Weyl group of $G$, $P_q$ is the $q$-Kostant partition function, and $l(w)$ is the length of $w \in W$.

Much work has been done in relation to these ideas, see \cite{KwonJang2021, LecouveyLenart2020, NelsenRam2003} and the references within.

The separation of variables above was generalized to the linear isotropy representation for a symmetric space by Kostant and Rallis \cite{KostantRallis1971}, and yet further to finite order automorphisms by Vinberg \cite{Vinberg1976}. Vinberg's work recovers the Kostant-Rallis results when $k=2$, which makes $\theta^2 = \text{id}$ an involution, and $(G,K)$ a symmetric pair. We may expand the $q$-graded character of the harmonics analogously in the Vinberg setting as
\begin{equation*}
    \text{char}_q(\mathcal{H}) = \sum_\lambda m^{(G,K)}_{\lambda}(q) \, s^\lambda_K.
\end{equation*}
The polynomials $m^{(G,K)}_{\lambda}(q)$ are much less understood. 

In the Kostant-Rallis setting, the graded multiplicities of an irreducible representation $\lambda$ in $\mathcal{H}$ may be described in terms of the eigenvalues of a certain element of $\mathfrak{k}$, see \cite[Theorem 21]{KostantRallis1971}. In \cite{WallachWillenbring2000}, Wallach and Willenbring obtain formulas similar to Hesselink for some examples including: $(GL_{2n}, Sp_{2n})$, $(SO_{2n+2},SO_{2n+1})$, and $(E_6, F_4)$. Wallach and Willenbring also worked out the example of $(SL_4, SO_4)$ explicitly and other results in special cases have appeared, \cite{WillenbringVanGroningen2014, JohnsonWallach1977}. There are also stable results stemming from the classical restriction rules of Littlewood \cite{HoweTanWillenbring2005, HoweTanWillenbring2008, Littlewood1944, Littlewood1945, Willenbring2002}. Recently, Frohmader developed a combinatorial formula for $(\GL_n, \Or_n)$ which is expected to generalize to the other classical symmetric pairs \cite{frohmader2023graded}. 

Moving outside of the Kostant-Rallis setting, even less is known. To our knowledge the only graded result is due to Heaton \cite{heaton2023graded}, in which he determines the graded multiplicity for $(\GL_{2r}, \GL_2 \times \dots \times \GL_2)$ by counting integral points on the intersection of polyhedra. Wallach has developed ungraded multiplicity formulas, see \cite{Wallach2017, Wallach2017-2}. Our contribution is a stable formula for $m_\lambda^{(G, K)}(q)$ for $(G,K) = (\GL_N, \GL_{n_1} \times \dots \times \GL_{n_k})$, where $N = \sum_{i=1}^k n_i$.

\subsection{Partitions, Tableaux, and $\GL_n$ representations }

For a partition $\lambda$, let $l(\lambda)$ denote length($\lambda)$ and $|\lambda|$ the size (number of boxes) of $\lambda$. Let $\Par_n$ denote the set of partitions with length $\leq n$ (including the empty partition $\varnothing$) and $\Par$ the set of all partitions. Two bases are useful in discussing irreducible polynomial representations of $\GL_n$: $\epsilon_1, \dots , \epsilon_n$  and $\omega_1, \dots , \omega_n$, where $\omega_i = \epsilon_1 + \epsilon_2 + \dots + \epsilon_i$. The polynomial representations of $\GL_n$ are in one to one correspondence with highest weights $\lambda = a_1\epsilon_1 + \dots + a_n \epsilon_n$, where $a_1 \geq a_2 \geq \dots \geq a_n \geq 0$ are non-negative integers. This gives a bijection between partitions and irreducible polynomial $\GL_n$ representations. In terms of the $\omega_i$ basis, the highest weights are given by $\lambda = b_1 \omega_1 + \dots + b_n \omega_n$ where all $b_i \in \Z_{\geq 0}$. There are no order conditions. So the $\omega_i$ basis allows us to identify irreducible polynomial $\GL_n$ representations with n-tuples of non-negative integers. Computing the change of basis matrices, we see
$$\lambda = (a_1 - a_2)\omega_1 + \dots (a_{n-1} - a_n)\omega_{n-1} + a_n \omega_n$$
$$\lambda = (b_1 + \dots + b_n) \epsilon_1 + (b_2 + \dots + b_n) \epsilon_2 + \dots + (b_{n-1} + b_n)\epsilon_{n-1} + b_n \epsilon_n$$
In terms of partitions, $\epsilon_i$ corresponds to a box in row $i$ and $\omega_i$ corresponds to a column of length $i$.

Define a partial order on $\Par$ by $\mu \leq \lambda$ if $\lambda - \mu \in \Par$. In what follows, it will be helpful to view the product order on $\Z^\infty = \{(a_1, a_2, \dots) \ : \ a_i \in \Z \text{ and } a_i = 0 \text{ for all but finitely many } i \}$, as extending $\leq$. Recall this is the order $(b_1, b_2, \dots ) \leq (a_1, a_1, \dots )$ if and only if $a_i - b_i \in \Z_{\geq 0}$ for all $i$. To accomplish this, write $\lambda = a_1 \omega_1 + \dots + a_n \omega_n$ and $\mu = b_1 \omega_1 + \dots + b_n \omega_n$ in terms of the $\omega_i$ basis. Notice $\lambda - \mu \in \Par$ if and only if $(a_1, \dots , a_n, 0, 0, \dots) - (b_1, \dots , b_n, 0, 0, \dots) \in \Z^\infty_{\geq 0}$ if and only if $a_i - b_i \in \Z_{\geq 0}$. 

Let $SST_n(\lambda)$ be the set of semistandard tableaux on $\lambda$ with entries in $\{1, \dots , n\}$ and $SST(\lambda)$ the set of semistandard tableaux on $\lambda$ with entries in $\Z_{>0}$. We view $SST_n(\lambda)$ and $SST(\lambda)$ as $\gl_n$ and $\gl_\infty$ crystals, see \cite{BumpSchilling2017, HongKang2002}. Define the weight of a tableau $T \in SST(\lambda)$ by $\wt(T) = k_1 \epsilon_1 + \dots + k_n \epsilon_n$ where $k_i$ denotes the number of $i$'s appearing in $T$. Writing $\wt(T)$ in terms of the $\omega_i$, we see the reason for extending $\leq$ to $Z^\infty$ is to enable comparison with non-dominant weights. For example, given $T$ a tableau on a one-box shape with content 2, $\wt(T) = \epsilon_2 = - \omega_1 + \omega_2$ which we identify with $(-1, 1, 0,0, \dots)$.   

\section{The action of $K^2$}
We have an action of $K = \GL_{n_1} \times \GL_{n_2} \times  \dots \times \GL_{n_k}$ on $\p = M_{n_2,n_1} \oplus M_{n_3,n_2} \oplus \dots \oplus M_{n_k,n_{k-1}} \oplus M_{n_1,n_k}$ by 
$$(g_1,g_2, \dots ,g_k) \cdot (X_1,X_2, \dots , X_k) = (g_2 X_1 g_1^{-1}, g_3 X_2 g_2^{-1} \dots g_1 X_k g_k^{-1}).$$

\begin{figure}
    \centering
    \begin{tikzpicture}
        \draw[thick,dashed] (-2.4,-1.4) arc (180:150:1cm);
        \draw[thick,->] (-1.7,-0.3) node[anchor=north east] {$\C^{n_k}$} arc (135:90:1cm) node[anchor=south east] {$M_{n_1,n_k}$};
        \draw[thick,->] (0,0) node[anchor= east] {$\C^{n_1}$} node[anchor=south west] {$M_{n_2,n_1}$} arc (90:45:1cm) node[anchor=north west] {$\C^{n_2}$};
        \draw[thick,->] (1.2,-1) arc (45:0:1cm) node[anchor=south west] {$M_{n_3,n_2}$} node[anchor=north] {$\C^{n_3}$};
        \draw[thick,dashed] (1.5,-2.3) arc (0:-30:1cm);
    \end{tikzpicture}
    \caption{Cyclic quiver on $k$ nodes}
    \label{fig:cyclic-quiver}
\end{figure}
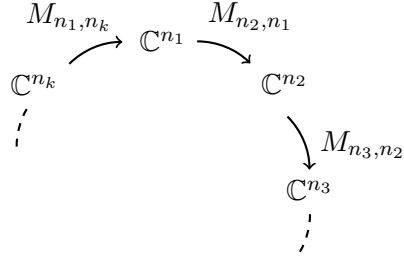

\noindent This yields an action of $K$ on $\C[\p]$, $k \cdot f(X) = f(k^{-1}\cdot X)$ for $k \in K$ and $X \in \p$. We would like to understand the graded multiplicities of this action. (Notice the indices are cyclically permuted, as in Figure \ref{fig:cyclic-quiver}).

We can approach the problem through branching starting from the action of $K^2 = \GL_{n_1}^2 \times \GL_{n_2}^2 \times  \dots \times \GL_{n_k}^2$ on $\C[\p]$  by 
$$(g_1,h_1, \dots ,g_k, h_k) \cdot f(X_1,X_2, \dots , X_k) = f(g_2^{-1} X_1 h_1, \dots g_1^{-1} X_k h_k).$$

\noindent Here $\GL_{n_i}^2$ denotes $\GL_{n_i} \times \GL_{n_i}$. Of course, we want to restrict this action to the diagonal subgroup $\Delta = \{(g_1,g_1,g_2,g_2, \dots , g_k,g_k) \} \cong K$. So we have two tasks: first understand the representation of the big group $K^2$, second understand how this representation restricts to $\Delta$. 

We begin by determining the of $K^2$ irreducible representations in $\C[\p]$. First, recall,
    
    \begin{prop}\cite[Proposition 4.2.5]{GoodmanWallach2009} The irreducible representations of $\GL_{n_1} \times \GL_{n_2} \times \dots \times \GL_{n_l}$ are the representations $V_1 \otimes V_2 \otimes \dots \otimes V_l$ where $V_i$ is an irreducible representation of $\GL_{n_i}$.
    
    \end{prop}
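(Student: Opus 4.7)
The plan is to prove both directions of the correspondence and reduce to the two-factor case, since the general statement then follows by induction: having shown the claim for $l=2$, regroup $\GL_{n_1} \times \GL_{n_2} \times \cdots \times \GL_{n_l}$ as $\GL_{n_1} \times (\GL_{n_2} \times \cdots \times \GL_{n_l})$ and apply the inductive hypothesis to the second factor. The case $l=1$ is vacuous. So I focus on proving, for rational representations of two reductive algebraic groups over $\C$, that external tensor products of irreducibles are irreducible and exhaust the irreducibles.

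For the first direction, let $V_1, V_2$ be irreducible rational representations of $\GL_{n_1}, \GL_{n_2}$ respectively, and let $W \subseteq V_1 \otimes V_2$ be a nonzero invariant subspace. The key tool is the Jacobson/Burnside density theorem: because each $V_i$ is a finite-dimensional irreducible representation, the image of $\GL_{n_i}$ inside $\text{End}(V_i)$ linearly spans all of $\text{End}(V_i)$. Taking products, the linear span of the image of $\GL_{n_1} \times \GL_{n_2}$ inside $\text{End}(V_1 \otimes V_2)$ contains $\text{End}(V_1) \otimes \text{End}(V_2) = \text{End}(V_1 \otimes V_2)$, which acts transitively on nonzero vectors. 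Hence $W = V_1 \otimes V_2$.

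For the converse, let $W$ be any irreducible rational representation of $\GL_{n_1} \times \GL_{n_2}$. Restrict first to $\GL_{n_1} \times \{e\}$; by complete reducibility (available since $\GL_{n_1}$ is reductive) we decompose $W$ into $\GL_{n_1}$-isotypic components. Each isotypic component is stable under the commuting $\GL_{n_2}$-action, hence is invariant under the full group $\GL_{n_1} \times \GL_{n_2}$. Irreducibility of $W$ forces a single isotypic component, so $W \cong V_1 \otimes U$ as $\GL_{n_1}$-modules for a unique irreducible $V_1$, where $U = \Hom_{\GL_{n_1}}(V_1, W)$ is the multiplicity space. Because the $\GL_{n_2}$-action commutes with $\GL_{n_1}$, Schur's lemma implies it acts purely on the factor $U$, giving $W \cong V_1 \otimes U$ as $\GL_{n_1} \times \GL_{n_2}$-modules. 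Any $\GL_{n_2}$-invariant subspace $U' \subseteq U$ yields a $\GL_{n_1} \times \GL_{n_2}$-invariant subspace $V_1 \otimes U' \subseteq W$, so irreducibility of $W$ forces $U$ to be irreducible for $\GL_{n_2}$, finishing the step.

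The only real care needed is to work consistently in the category of rational (algebraic) representations: both Burnside's density theorem and complete reducibility are classical in this setting for reductive algebraic groups in characteristic zero, so there is no genuine obstacle. Most of the work is bookkeeping: the commutant/density pair is exactly the right tool, and induction on $l$ handles the multi-factor case transparently.
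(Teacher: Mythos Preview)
Your argument is correct and standard. The paper, however, does not supply its own proof of this proposition: it is simply recalled as a citation to \cite[Proposition 4.2.5]{GoodmanWallach2009} and used as background. So there is no ``paper's proof'' to compare against; your write-up is a clean self-contained justification of the cited fact, using the usual Burnside density plus isotypic-decomposition argument and induction on the number of factors.
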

        
Next, notice $\C[\p] = \C[M_{n_2,n_1} \oplus \dots \oplus M_{n_1,n_k}] \cong \C[M_{n_2,n_1} ]\otimes \dots \otimes \C[M_{n_1,n_k}]$, see \cite[Lemma A.1.9]{GoodmanWallach2009} and we have $k$ commuting actions. For example, $\GL_{n_2} \times \GL_{n_1}$ acts by 
    
    $$f_1(X_1) \otimes \dots \otimes f_k(X_k)\to f_1(g_2^{-1}X_1 h_1) \otimes \dots \otimes f_k(X_k).$$

\noindent In fact, we can recognize this representation as the tensor product of $k$ distinct actions, so we can decompose the actions separately. 

Now recall,

    \begin{thm}\label{thm-multfreedecomp}\cite[Theorem 5.6.7]{GoodmanWallach2009} The degree $d$ component of $\C[M_{n_i,n_j}]$ under the action of $\GL_{n_i} \times \GL_{n_j}$ decomposes as follows
    
    $$\C^d[M_{n_i,n_j}] \cong \bigoplus_\lambda (F_{n_i}^\lambda)^*\otimes (F_{n_j}^\lambda)$$

    \noindent with the sum over all nonnegative dominant weights $\lambda$ of size $d$ and length $\leq \min\{n_i,n_j\}$.
    \end{thm}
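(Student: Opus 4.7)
The plan is to reduce the statement to the classical $(\GL,\GL)$-duality decomposition of the symmetric algebra on a tensor product. First, I would identify the matrix space as a tensor product: under the action $(g_i,g_j) \cdot X = g_i X g_j^{-1}$, we have $M_{n_i,n_j} \cong \C^{n_i} \otimes (\C^{n_j})^*$ as a $\GL_{n_i} \times \GL_{n_j}$-module. Dualizing gives $M_{n_i,n_j}^* \cong (\C^{n_i})^* \otimes \C^{n_j}$, and therefore
\begin{equation*}
\C^d[M_{n_i,n_j}] \;\cong\; S^d\bigl((\C^{n_i})^* \otimes \C^{n_j}\bigr).
\end{equation*}

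The central tool is $(\GL,\GL)$-duality: for any finite-dimensional $U$ and $W$,
\begin{equation*}
S\bigl(U \otimes W\bigr) \;\cong\; \bigoplus_\lambda F^\lambda(U) \otimes F^\lambda(W),
\end{equation*}
where $F^\lambda$ denotes the Schur functor and the sum runs over partitions with $\ell(\lambda) \leq \min(\dim U, \dim W)$. Applying this with $U = (\C^{n_i})^*$ and $W = \C^{n_j}$, and using $F^\lambda((\C^{n_i})^*) \cong (F_{n_i}^\lambda)^*$ (which can be verified by comparing characters, since the eigenvalues of $g$ on the dual standard module are $x_1^{-1},\dots,x_{n_i}^{-1}$), extracting the degree-$d$ summand gives exactly $\bigoplus_{|\lambda|=d}(F_{n_i}^\lambda)^* \otimes F_{n_j}^\lambda$. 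The length constraint $\ell(\lambda) \leq \min(n_i, n_j)$ is automatic because $F^\lambda$ vanishes whenever $\ell(\lambda)$ exceeds the dimension of its argument.

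The main nontrivial ingredient is proving $(\GL,\GL)$-duality itself. I would do this via Schur-Weyl duality on each tensor factor. Working in characteristic zero, $S^d(U \otimes W) \cong ((U \otimes W)^{\otimes d})^{S_d}$, and we compute
\begin{equation*}
(U \otimes W)^{\otimes d} \;\cong\; U^{\otimes d} \otimes W^{\otimes d} \;\cong\; \bigoplus_{\mu,\nu}\bigl(F^\mu(U) \otimes M^\mu\bigr) \otimes \bigl(F^\nu(W) \otimes M^\nu\bigr),
\end{equation*}
viewed as a $\GL(U) \times \GL(W) \times S_d$-module with $S_d$ acting diagonally on the Specht module factors $M^\mu$ and $M^\nu$. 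Taking $S_d$-invariants, the factor $(M^\mu \otimes M^\nu)^{S_d}$ is isomorphic to $\Hom_{S_d}(M^\mu, M^\nu)$ via the self-duality of Specht modules, and by Schur's lemma this is $\delta_{\mu\nu}\C$. The double sum collapses to the diagonal, yielding $\bigoplus_\lambda F^\lambda(U) \otimes F^\lambda(W)$ as claimed.
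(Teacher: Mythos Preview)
Your argument is correct and is the standard route to this decomposition: identify $M_{n_i,n_j}$ with $\C^{n_i}\otimes(\C^{n_j})^*$, dualize to get $\C^d[M_{n_i,n_j}]\cong S^d((\C^{n_i})^*\otimes\C^{n_j})$, then invoke the Cauchy decomposition (equivalently, $(\GL,\GL)$-duality), which you derive from Schur--Weyl duality applied to each tensor factor together with the orthogonality of Specht modules.

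There is nothing to compare against in the paper itself: the authors do not prove this theorem but simply quote it as \cite[Theorem~5.6.7]{GoodmanWallach2009}. Your sketch is essentially how Goodman--Wallach themselves establish the result, so you are supplying the argument the paper outsources to that reference.
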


Hence we have the following graded decomposition of the $K^2$ representation (Note: In all that follows we consider our indexing with respect to the cyclic quiver, i.e. mod k with representatives 1, 2, ..., k):

    \begin{thm} \label{thm-biggroupdecomp}
        The degree $d$ component of $\C[\p]$ under the action of $K^2$ decomposes as follows 

        $$\C[\p] \cong \bigoplus_{\lambda_1, \lambda_2, \dots , \lambda_k} [(F_{n_2}^{\lambda_1})^*\otimes F_{n_1}^{\lambda_1}] \otimes [(F_{n_3}^{\lambda_2})^*\otimes F_{n_2}^{\lambda_2}] \otimes \dots \otimes [(F_{n_1}^{\lambda_k})^*\otimes F_{n_k}^{\lambda_k}]$$

        $$\cong \bigoplus_{\lambda_1, \lambda_2, \dots , \lambda_k} \bigotimes_{i=1}^k [(F_{n_{i+1}}^{\lambda_i})^* \otimes F_{n_i}^{\lambda_i}]$$

        \noindent with the sum over all nonnegative dominant weights $\lambda_1, \lambda_2, \dots ,\lambda_k$ such that $|\lambda_1| + |\lambda_2| + \dots + |\lambda_k| = d$ and length($\lambda_i) \leq \min\{n_i,n_{i+1}\}$.
    \end{thm}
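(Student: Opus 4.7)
The plan is straightforward: assemble the two cited Goodman--Wallach results with the natural tensor-product decomposition of the coordinate ring. First, using the isomorphism $\C[\p] \cong \C[M_{n_2,n_1}] \otimes \cdots \otimes \C[M_{n_1,n_k}]$ already noted above, I observe that the $K^2$-action factorizes: from the formula $(g_1,h_1,\dots,g_k,h_k)\cdot(X_1,\dots,X_k) = (g_2^{-1}X_1h_1,\dots,g_1^{-1}X_kh_k)$ it is immediate that the subgroup consisting of the second factor of the $(i+1)$-st pair together with the first factor of the $i$-th pair, namely a copy of $\GL_{n_{i+1}} \times \GL_{n_i}$, acts on the $i$-th tensor factor $\C[M_{n_{i+1},n_i}]$ and that all other factors of $K^2$ act trivially there. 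So as a $K^2$-representation, $\C[\p]$ is an external tensor product of $k$ commuting $\GL_{n_{i+1}} \times \GL_{n_i}$-representations.

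Second, I apply Theorem \ref{thm-multfreedecomp} to each tensor factor individually. For each $i$ this gives
$$\C[M_{n_{i+1},n_i}] \cong \bigoplus_{\lambda_i}(F^{\lambda_i}_{n_{i+1}})^* \otimes F^{\lambda_i}_{n_i},$$
summed over partitions $\lambda_i$ with length at most $\min\{n_i, n_{i+1}\}$, and the degree-$d_i$ subspace on the left corresponds to those $\lambda_i$ with $|\lambda_i| = d_i$.

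Third, I distribute the outer tensor product across these direct sums to obtain
$$\C[\p] \cong \bigoplus_{\lambda_1,\dots,\lambda_k} \bigotimes_{i=1}^k \bigl[(F^{\lambda_i}_{n_{i+1}})^* \otimes F^{\lambda_i}_{n_i}\bigr],$$
and invoke the cited Proposition 4.2.5 to certify that each summand, being the external tensor product of irreducibles for the individual factors of $K^2$, is itself an irreducible $K^2$-representation. For the grading, note that under the isomorphism $\C[\p] \cong \bigotimes_i \C[M_{n_{i+1},n_i}]$ the total polynomial degree is additive across tensor factors, so the degree-$d$ component of $\C[\p]$ is exactly the subspace spanned by tuples with $|\lambda_1| + \cdots + |\lambda_k| = d$.

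There is no real obstacle here; the statement is a bookkeeping assembly of three already-quoted facts. The only point requiring care is the cyclic indexing convention --- confirming that reading the indices modulo $k$, the pair $(g_{i+1}, h_i)$ from $K^2$ is what acts on $M_{n_{i+1}, n_i}$ --- together with the observation that the polynomial grading is preserved by the tensor decomposition, both of which are routine.
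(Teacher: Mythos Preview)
Your proposal is correct and follows essentially the same route as the paper: the paper's proof is the two-line ``decompose each factor separately and apply Theorem~\ref{thm-multfreedecomp},'' and you have simply unpacked that, adding the explicit observation about irreducibility and additivity of the grading. One harmless slip: in your verbal description you say ``the second factor of the $(i{+}1)$-st pair together with the first factor of the $i$-th pair,'' but from the formula $X_i \mapsto g_{i+1}^{-1} X_i h_i$ it is the \emph{first} factor of the $(i{+}1)$-st pair and the \emph{second} factor of the $i$-th pair that act on $\C[M_{n_{i+1},n_i}]$; your written formula is correct, only the prose labels are swapped.
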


    \begin{proof}
        As discussed above, we can decompose each $\C^d[M_{n_i,n_j}]$ factor separately. Apply Theorem \ref{thm-multfreedecomp}.
    \end{proof}

With the action of $K^2$ understood, we turn to the problem of branching to the diagonal subgroup $\Delta$.

\section{Stable Multiplicities via Branching}

Let $n = \min \{n_1, \dots , n_k\}$. We work with the pairs $\GL_{n_1}^2$, $\GL_{n_2}^2$, ... , $\GL_{n_k}^2$ separately. Essentially, we choose to group the decomposition from Theorem \ref{thm-biggroupdecomp} as

$$\bigoplus_{\lambda_1, \lambda_2, \dots , \lambda_k} [F_{n_1}^{\lambda_1}\otimes (F_{n_1}^{\lambda_k})^*] \otimes [F_{n_2}^{\lambda_2}\otimes (F_{n_2}^{\lambda_1})^*] \otimes \dots \otimes [F_{n_k}^{\lambda_k}\otimes (F_{n_k}^{\lambda_{k-1}})^*]$$

$$\cong \bigoplus_{\lambda_1, \lambda_2, \dots , \lambda_k} \bigotimes_{i=1}^k [F_{n_i}^{\lambda_i}\otimes (F_{n_i}^{\lambda_{i-1}})^*]$$

Recall,

\begin{thm} [Stable Branching Rule] \cite[Theorem 2.1.4.1]{HoweTanWillenbring2008} \label{thm-stablebranching}

For $l(\lambda_i) + l(\lambda_{i-1}) \leq n_i$,

$$\dim \Hom_{\GL_{n_i}}(F^{\nu^+, \nu^-}_{n_i}, F^{\lambda_i}_{n_i} \otimes (F^{\lambda_{i-1}}_{n_i})^*) = \sum_\alpha c^{\lambda_i}_{\alpha, \nu^+} c^{\lambda_{i-1}}_{\alpha, \nu^-}.$$

\end{thm}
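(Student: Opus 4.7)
The plan is to realize $F^{\lambda_i}_{n_i}\otimes (F^{\lambda_{i-1}}_{n_i})^*$ as an isotypic piece of a polynomial ring carrying a reductive dual pair action, and then read off the multiplicity of $F^{\nu^+,\nu^-}_{n_i}$ using the First Fundamental Theorem together with the Littlewood--Richardson rule.

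Choose auxiliary dimensions $p\geq l(\lambda_i)$ and $q\geq l(\lambda_{i-1})$, and consider $R=\C[M_{p,n_i}\oplus M_{n_i,q}]$ under the natural action of $\GL_p\times\GL_{n_i}\times\GL_q$, where $\GL_{n_i}$ acts on both summands compatibly so that matrix products $(A,B)\mapsto AB\in M_{p,q}$ are invariant. Applying Theorem~\ref{thm-multfreedecomp} factor-by-factor,
\begin{equation*}
R \cong \bigoplus_{\alpha,\beta}\bigl[F^\alpha_{n_i}\otimes (F^\beta_{n_i})^*\bigr]\otimes\bigl[(F^\alpha_p)^*\otimes F^\beta_q\bigr],
\end{equation*}
so the target rational representation $F^{\lambda_i}_{n_i}\otimes (F^{\lambda_{i-1}}_{n_i})^*$ realizes as the $\GL_{n_i}$-structure on the $(F^{\lambda_i}_p)^*\otimes F^{\lambda_{i-1}}_q$-isotypic piece under $\GL_p\times\GL_q$.

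Next, I would invoke the stable Howe duality for the dual pair $(\GL_{n_i},\GL_p\times\GL_q)$: in the range $p+q\leq n_i$, there is a multiplicity-free decomposition
\begin{equation*}
R \cong R^{\GL_{n_i}}\otimes\bigoplus_{\nu^+,\nu^-}F^{\nu^+,\nu^-}_{n_i}\otimes (F^{\nu^+}_p)^*\otimes F^{\nu^-}_q,
\end{equation*}
and the First Fundamental Theorem for $\GL_{n_i}$ identifies $R^{\GL_{n_i}}=\C[M_{p,q}]=\bigoplus_\gamma (F^\gamma_p)^*\otimes F^\gamma_q$ via the contraction $(A,B)\mapsto AB$.

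Expanding $(F^{\nu^+}_p)^*\otimes (F^\gamma_p)^*=\bigoplus_\alpha c^\alpha_{\nu^+,\gamma}(F^\alpha_p)^*$ and $F^{\nu^-}_q\otimes F^\gamma_q=\bigoplus_\beta c^\beta_{\nu^-,\gamma}F^\beta_q$ via the Littlewood--Richardson rule, collecting the $(F^\alpha_p)^*\otimes F^\beta_q$ coefficients in both decompositions of $R$, and using $c^\alpha_{\nu^+,\gamma}=c^\alpha_{\gamma,\nu^+}$, I obtain
\begin{equation*}
\dim\Hom_{\GL_{n_i}}(F^{\nu^+,\nu^-}_{n_i},\,F^\alpha_{n_i}\otimes (F^\beta_{n_i})^*)=\sum_\gamma c^\alpha_{\gamma,\nu^+}c^\beta_{\gamma,\nu^-};
\end{equation*}
specializing to $\alpha=\lambda_i$ and $\beta=\lambda_{i-1}$ yields the theorem. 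The main obstacle is establishing the stable Howe-type decomposition above, which can be handled by explicitly constructing joint $(\GL_{n_i},\GL_p\times\GL_q)$-highest weight vectors as determinantal minors of $A\in M_{p,n_i}$ and $B\in M_{n_i,q}$ and verifying by character computation that they generate $R$ freely over $R^{\GL_{n_i}}$.
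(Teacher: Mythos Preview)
The paper does not prove this statement: it is quoted verbatim as \cite[Theorem~2.1.4.1]{HoweTanWillenbring2008} and then used as a black box in the subsequent corollaries. So there is no ``paper's own proof'' to compare against.

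Your sketch is essentially the standard argument behind the cited result: realize $F^{\lambda_i}_{n_i}\otimes(F^{\lambda_{i-1}}_{n_i})^*$ as the $(F^{\lambda_i}_p)^*\otimes F^{\lambda_{i-1}}_q$-isotypic piece of $\C[M_{p,n_i}\oplus M_{n_i,q}]$, invoke the $(\GL_{n_i},\GL_p\times\GL_q)$ duality in the stable range $p+q\le n_i$, identify the $\GL_{n_i}$-invariants with $\C[M_{p,q}]$ via the First Fundamental Theorem, and compare $\GL_p\times\GL_q$-multiplicities using Littlewood--Richardson. This is correct in outline. One point you should make explicit: the two constraints you impose, namely $p\ge l(\lambda_i)$, $q\ge l(\lambda_{i-1})$ (so that the isotypic piece is nonzero) and $p+q\le n_i$ (so that the Howe decomposition is multiplicity-free and the invariants are a free polynomial ring), are simultaneously satisfiable precisely because of the stability hypothesis $l(\lambda_i)+l(\lambda_{i-1})\le n_i$; taking $p=l(\lambda_i)$ and $q=l(\lambda_{i-1})$ works. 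As written you assert both constraints without linking them, which obscures where the hypothesis enters.
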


$F_{n_i}^{\nu^+, \nu^-}$ is our notation for the rational representation of $\GL_{n_i}$ corresponding to the tuple of partitions $(\nu^+, \nu^-)$. Both $\nu^+$ and $\nu^-$ are partitions, and if $\nu^+ = (a_1,a_2,\dots,a_\ell)$ and $\nu^- = (b_1, b_2, \dots, b_m)$ then $F_{n_i}^{\nu^+, \nu^-}$ is the rational representation of $\GL_{n_i}$ with highest weight $$(a_1,a_2,\dots,a_\ell,0,\dots,0,-b_m,-b_{m-1},\dots,-b_2,-b_1),$$ with the number of interior zeros arranged appropriately, see \cite{STEMBRIDGE198779}. Hence we have,

\begin{thm}
    For degree $d \leq n$, the degree $d$ component of $\C[\p]$ under the action of $K$ decomposes as follows,

    $$\bigoplus_{\alpha_i, \lambda_i, \nu_i^\pm}\bigotimes_{i=1}^k
    c^{\lambda_i}_{\alpha_i, \nu_i^+} c^{\lambda_{i-1}}_{\alpha_i, \nu_i^-} 
    F^{\nu_i^+, \nu_i^-}_{n_i}$$

    \noindent with the sum over all $\{\alpha_i, \lambda_i, \nu_i^\pm\}_{i=1}^k$ in $\Par_n$ such that $|\lambda_1| + \dots + |\lambda_k| = d$. In particular, the multiplicity of the $K$ irrep $\nu = (\nu_1^\pm, \dots , \nu_k^\pm)$ appearing in degree $d$ is given by

    $$\sum_{\alpha_i, \lambda_i} (\prod_{i=1}^k
    c^{\lambda_i}_{\alpha_i, \nu_i^+} c^{\lambda_{i-1}}_{\alpha_i, \nu_i^-}).$$
    
\end{thm}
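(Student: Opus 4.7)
The plan is to combine the two preceding results essentially mechanically. Theorem \ref{thm-biggroupdecomp} already gives the $K^2$-decomposition of $\C[\p]$, and the stable branching rule (Theorem \ref{thm-stablebranching}) describes exactly how each factor $F_{n_i}^{\lambda_i} \otimes (F_{n_i}^{\lambda_{i-1}})^*$ restricts from $\GL_{n_i}^2$ to the diagonal $\GL_{n_i}$. The main structural insight, already baked into the regrouped form
$$\bigoplus_{\lambda_1, \dots, \lambda_k} \bigotimes_{i=1}^k \left[F_{n_i}^{\lambda_i}\otimes (F_{n_i}^{\lambda_{i-1}})^*\right]$$
shown just before the theorem, is that the restriction $K^2 \downarrow \Delta \cong K$ factors completely across the $k$ pairs $\GL_{n_i}^2$, because the diagonal subgroup $\Delta$ is itself a direct product of the diagonal $\GL_{n_i}$'s. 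So the problem reduces to branching each bracketed factor independently.

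First I would start from the regrouped $K^2$-decomposition above, with the sum ranging over $\lambda_i \in \Par_n$ with $|\lambda_1| + \dots + |\lambda_k| = d$. Next I would verify the stability hypothesis of Theorem \ref{thm-stablebranching}, namely $l(\lambda_i) + l(\lambda_{i-1}) \leq n_i$, holds uniformly for every $i$ in the stable range $d \leq n$. This follows from the chain
$$l(\lambda_i) + l(\lambda_{i-1}) \leq |\lambda_i| + |\lambda_{i-1}| \leq |\lambda_1| + \dots + |\lambda_k| = d \leq n \leq n_i,$$
using $l(\mu) \leq |\mu|$ and the cyclic bound $n = \min_j n_j$.

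With the hypothesis in hand, I would apply Theorem \ref{thm-stablebranching} factor-by-factor to obtain the $\GL_{n_i}$-decomposition
$$F_{n_i}^{\lambda_i}\otimes (F_{n_i}^{\lambda_{i-1}})^* \;\cong\; \bigoplus_{\nu_i^+, \nu_i^-} \left( \sum_{\alpha_i} c^{\lambda_i}_{\alpha_i, \nu_i^+}\, c^{\lambda_{i-1}}_{\alpha_i, \nu_i^-} \right) F_{n_i}^{\nu_i^+, \nu_i^-},$$
since these are completely reducible rational representations, and Hom-dimensions coincide with multiplicities. Then I would invoke the proposition that irreducibles of a product group are exterior tensor products of the factors' irreducibles to take the tensor product over $i$, producing an irreducible $K$-representation $\bigotimes_i F_{n_i}^{\nu_i^+, \nu_i^-}$ of multiplicity $\prod_i \sum_{\alpha_i} c^{\lambda_i}_{\alpha_i, \nu_i^+} c^{\lambda_{i-1}}_{\alpha_i, \nu_i^-}$. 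Finally, summing over the partitions $\lambda_i$ (with the degree constraint) and over $\alpha_i$ yields the claimed decomposition and multiplicity formula.

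There is no serious obstacle here: the argument is essentially bookkeeping once the two cited theorems are in place. The only delicate point is the cyclic indexing (treating subscripts mod $k$ so that $\lambda_{i-1}$ at $i = 1$ is $\lambda_k$) and making sure the stability range $d \leq n$ is enforced so that every one of the $k$ applications of Theorem \ref{thm-stablebranching} is simultaneously valid; the bound $|\lambda_i| + |\lambda_{i-1}| \leq d$ makes this uniform in $i$.
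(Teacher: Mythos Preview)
Your proposal is correct and follows essentially the same route as the paper: verify the stability hypothesis via the chain $l(\lambda_i)+l(\lambda_{i-1}) \leq |\lambda_i|+|\lambda_{i-1}| \leq d \leq n \leq n_i$, then apply Theorem~\ref{thm-stablebranching} to each factor and tensor up. The paper's proof additionally remarks that the restriction of all $\alpha_i,\lambda_i,\nu_i^\pm$ to $\Par_n$ is harmless in degree $\leq n$, which you implicitly use but do not state; it follows because nonvanishing of $c^{\lambda_i}_{\alpha_i,\nu_i^+}$ forces $\alpha_i,\nu_i^+ \subseteq \lambda_i$ and $l(\lambda_i)\leq |\lambda_i|\leq d\leq n$.
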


\begin{proof}
    Say $d \leq n$. Then for any $\lambda_i, \lambda_{i-1}$, $l(\lambda_i) + l(\lambda_{i-1}) \leq |\lambda_i| + |\lambda_{i-1}| \leq d \leq n \leq n_i$ so Theorem \ref{thm-stablebranching} applies and we understand the branching down to $K$. We also note it suffices to consider partitions in $\Par_n$ since if a partition $\alpha_i, \lambda_i$ or $\nu_i^\pm$ has length greater than $n$, it contributes to a degree greater than $n$ and so only impacts multiplicities outside the stable range.
\end{proof}

\begin{cor}
    The following gives the graded character $\chr_q(\C[\p])$ up to degree $n$,

    $$\sum_{\alpha_i, \lambda_i, \nu_i^\pm} q^{\sum |\lambda_i|}\prod_{i=1}^k
    c^{\lambda_i}_{\alpha_i, \nu_i^+} c^{\lambda_{i-1}}_{\alpha_i, \nu_i^-} 
    s^{\nu_i^+, \nu_i^-}_{n_i}$$

    \noindent where $s^{\nu_i^+, \nu_i^-}_{n_i}$ is the $\GL_{n_i}$ character of $F^{\nu_i^+, \nu_i^-}_{n_i}$ and the sum is taken over all $\{\alpha_i, \lambda_i, \nu_i^\pm\}_{i=1}^k$ in $\Par_n$.
    
\end{cor}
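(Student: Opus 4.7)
The corollary is a direct repackaging of the previous theorem into generating function form, so the plan is short. By definition the graded character is $\chr_q(\C[\p]) = \sum_{d \geq 0} q^d \chr(\C^d[\p])$. I would simply substitute the decomposition from the previous theorem into this sum. That theorem asserts that for $d \leq n$,
\begin{equation*}
\C^d[\p] \cong \bigoplus_{\{\alpha_i,\lambda_i,\nu_i^\pm\}}\bigotimes_{i=1}^k c^{\lambda_i}_{\alpha_i,\nu_i^+}\, c^{\lambda_{i-1}}_{\alpha_i,\nu_i^-}\, F^{\nu_i^+,\nu_i^-}_{n_i},
\end{equation*}
with indices ranging in $\Par_n$ and subject to $|\lambda_1|+\cdots+|\lambda_k|=d$. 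Taking characters distributes over direct sums and tensor products, so the $\GL_{n_i}$-character of each tensor factor is $s^{\nu_i^+,\nu_i^-}_{n_i}$, and the multiplicity coefficient becomes the scalar product $\prod_i c^{\lambda_i}_{\alpha_i,\nu_i^+}\, c^{\lambda_{i-1}}_{\alpha_i,\nu_i^-}$.

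Next I would swap the order of summation. Rather than summing first over $d$ and then over tuples satisfying $\sum_i|\lambda_i| = d$, I sum directly over all tuples $\{\alpha_i,\lambda_i,\nu_i^\pm\}$ in $\Par_n$ and record the total degree $\sum_i|\lambda_i|$ in the exponent of $q$. This eliminates the constraint and turns $q^d$ into $q^{\sum_i|\lambda_i|}$, yielding exactly the claimed expression.

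Finally I would remark on the range of validity. The previous theorem was only proved for $d \leq n$, so the equality of the generating function above with $\chr_q(\C[\p])$ holds only through degree $n$; terms in the sum with $\sum_i|\lambda_i| > n$ are formal and need not match the true graded character. There is no real obstacle: the only thing one has to be slightly careful about is not claiming equality past degree $n$, since the stable branching rule of Theorem \ref{thm-stablebranching} requires $l(\lambda_i) + l(\lambda_{i-1}) \leq n_i$, which is guaranteed in the stable range exactly as in the proof of the preceding theorem.
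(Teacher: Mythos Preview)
Your proposal is correct and matches the paper's approach: the paper states this corollary without proof, treating it as an immediate reformulation of the preceding theorem into generating-function form. Your explanation---substituting the degree-$d$ decomposition, taking characters, and exchanging the sum over $d$ for a sum over tuples weighted by $q^{\sum|\lambda_i|}$---is exactly the intended (and only) argument, including the caveat about validity only through degree $n$.
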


Next, we handle the invariants, which are generated by $Tr([X_1 X_2 \dots X_k]^i)$ for $1 \leq i \leq n$ by a result in \cite{LeBryunProcesi1990}.
\begin{prop}
    We have the separation of variables
    \begin{equation*}
        \C[\p] = \C[\p]^K \otimes \Ha.
    \end{equation*}
\end{prop}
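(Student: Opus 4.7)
The plan is to exhibit the cyclic quiver setup as a genuine Vinberg $\theta$-representation and then invoke Vinberg's separation of variables theorem cited in Section~2. Since the statement is really a specialization of \cite{Vinberg1976}, the work lies entirely in identifying the right automorphism $\theta$ and checking that its degree-one eigenspace is exactly $\mathfrak{p}$.

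First I would take $V = V_1 \oplus \cdots \oplus V_k$ and $G = \GL(V)$, with Lie algebra $\mathfrak{g} = \gl(V)$. Let $\zeta = e^{2\pi i/k}$ and let $t \in G$ be the block-diagonal element acting on $V_j$ by the scalar $\zeta^j$. Define $\theta : G \to G$ by $\theta(g) = tgt^{-1}$. Then $t^k$ is a scalar, so $\theta^k = \mathrm{id}$, and $\theta$ is an inner automorphism of order dividing $k$. The differential $d\theta$ acts on $\gl(V)$ by conjugation by $t$, so its $\zeta^j$-eigenspace is precisely
\begin{equation*}
\mathfrak{g}_j = \bigoplus_{i=1}^k \Hom(V_i, V_{i+j}),
\end{equation*}
with indices taken mod $k$. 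In particular $\mathfrak{g}_1 = \mathfrak{p}$, and the fixed-point subgroup $G^\theta$ is the block-diagonal $\GL(V_1) \times \cdots \times \GL(V_k) = K$, acting on $\mathfrak{g}_1$ exactly by the formula given at the start of Section~3.

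With the identification $(\mathfrak{g}, \theta, K, \mathfrak{g}_1) \leftrightarrow (\gl(V), \mathrm{Ad}(t), K, \mathfrak{p})$ in place, Vinberg's theorem \cite{Vinberg1976} asserts that $\C[\mathfrak{g}_1]$ is free as a module over $\C[\mathfrak{g}_1]^K$ and that a graded complement is provided by the harmonics $\Ha = \{ f \in \C[\mathfrak{g}_1] : \partial f = 0 \text{ for all non-constant } \partial \in \mathcal{D}(\mathfrak{g}_1)^K \}$, yielding
\begin{equation*}
\C[\mathfrak{p}] = \C[\mathfrak{p}]^K \otimes \Ha
\end{equation*}
as $K$-modules. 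Since the definitions of $K$, $\mathfrak{p}$, and $\Ha$ in Section~3 agree on the nose with their Vinberg counterparts, this is the claimed separation of variables.

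The main (and essentially only) obstacle is verifying that the hypotheses of Vinberg's theorem actually hold here, namely that $G$ is connected reductive and $\theta$ has finite order with the correct fixed-point group. Both are immediate: $\GL(V)$ is connected reductive, $\theta$ is an inner automorphism given by conjugation by a semisimple element and hence has order dividing $k$ on $G$ (even though $t$ itself need not satisfy $t^k = 1$, the induced automorphism does), and the fixed-point subgroup is read off directly from the block decomposition. With these checks done, the proposition is an immediate invocation of the result quoted in Section~2; no further combinatorics or explicit invariant theory is required at this stage (the compatibility with the Le Bruyn--Procesi generators of $\C[\mathfrak{p}]^K$ is noted earlier but not needed for the separation itself).
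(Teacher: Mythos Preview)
Your proof is correct and follows essentially the same route as the paper: you realize $K$ as $G^\theta$ for $\theta$ given by conjugation by a diagonal matrix with blocks of $k$th roots of unity, identify $\mathfrak{g}_1$ with $\mathfrak{p}$, and then invoke Vinberg's theorem. The only cosmetic difference is that you let $t$ act on $V_j$ by $\zeta^j$ while the paper uses $\zeta^{j-1}$, but these differ by a central scalar and so define the same automorphism.
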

\begin{proof}
    Notice that $K = G^\theta$ where $\theta:G \to G$ is given by conjugation by the diagonal matrix with entries equal to $k$th roots of unity $1,\zeta,\zeta^2,\dots,\zeta^{k-1}$, each appearing with multiplicities $n_1,\dots,n_k$. The conjugation action of $K$ on the $\zeta$-eigenspace is isomorphic to the action of $K$ on $\p$. The result now follows from Vinberg's theory \cite{Vinberg1976}.
\end{proof}
 
Hence, the graded character of $\Ha$ is given by

$$\chr_q(\Ha) = [\prod_{i=1}^n(1-q^{ki})]\chr_q(\C[\p])$$.

\begin{cor} \label{cor-gradedHchar}
    The following gives the graded character $\chr_q(\Ha)$ up to degree $n$,

    $$[\prod_{i=1}^n(1-q^{ki})]\sum_{\alpha_i, \lambda_i, \nu_i^\pm} q^{\sum |\lambda_i|}\prod_{i=1}^k
    c^{\lambda_i}_{\alpha_i, \nu_i^+} c^{\lambda_{i-1}}_{\alpha_i, \nu_i^-} 
    s^{\nu_i^+, \nu_i^-}_{n_i}$$

    \noindent where $s^{\nu_i^+, \nu_i^-}_{n_i}$ is the $\GL_{n_i}$ character of $F^{\nu_i^+, \nu_i^-}_{n_i}$ and the sum is taken over all partitions $\{\alpha_i, \lambda_i, \nu_i^\pm\}_{i=1}^k$ in $\Par_n$.

    In particular, the following formula provides the graded multiplicity of the $K$ irrep $\nu = (\nu_1^\pm, \dots , \nu_k^\pm)$ in $\Ha$, denoted $m_{\nu}(q)$, up to degree $n$,

    $$[\prod_{i=1}^n(1-q^{ki})]\sum_{\alpha_i, \lambda_i} q^{\sum |\lambda_i|}\prod_{i=1}^k
    c^{\lambda_i}_{\alpha_i, \nu_i^+} c^{\lambda_{i-1}}_{\alpha_i, \nu_i^-}.$$
\end{cor}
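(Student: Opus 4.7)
The plan is to combine three results already at hand: the separation of variables $\C[\p] = \C[\p]^K \otimes \Ha$ proved in the preceding proposition; the Le Bruyn--Procesi description of the generators of $\C[\p]^K$; and the explicit formula for $\chr_q(\C[\p])$ from the previous corollary. Taking graded characters in the separation of variables immediately yields
\begin{equation*}
\chr_q(\C[\p]) \;=\; \chr_q(\C[\p]^K) \cdot \chr_q(\Ha),
\end{equation*}
so the whole task reduces to computing the Hilbert series of the invariant ring and then dividing.

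To pin down $\chr_q(\C[\p]^K)$, I first observe that each generator $\text{Tr}[(X_1 X_2 \cdots X_k)^p]$ is homogeneous of degree $pk$, since $X_1 X_2 \cdots X_k$ is linear in each of the $k$ matrix factors and the trace preserves degree. Next, I would argue that the $n$ trace generators are algebraically independent: Vinberg's theorem makes $\C[\p]$ a free module over $\C[\p]^K$, and because $\C[\p]$ is a polynomial ring this forces $\C[\p]^K$ to be a graded polynomial ring as well; since Le Bruyn--Procesi produce exactly $n$ generators, these must form a regular sequence in degrees $k, 2k, \ldots, nk$. Hence
\begin{equation*}
\chr_q(\C[\p]^K) \;=\; \prod_{i=1}^n \frac{1}{1-q^{ki}}, \qquad \chr_q(\Ha) \;=\; \Bigl[\,\prod_{i=1}^n (1-q^{ki})\,\Bigr] \cdot \chr_q(\C[\p]).
\end{equation*}

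Substituting the previous corollary's formula for $\chr_q(\C[\p])$ produces the first claim of the corollary, and reading off the coefficient of $s^{\nu_1^+,\nu_1^-}_{n_1}\cdots s^{\nu_k^+,\nu_k^-}_{n_k}$ gives the multiplicity formula. The ``up to degree $n$'' qualifier is preserved automatically: the factor $\prod_{i=1}^n (1-q^{ki})$ involves only non-negative powers of $q$, so the coefficient of $q^d$ in the product for $d\leq n$ depends only on terms of $\chr_q(\C[\p])$ of degree $\leq n$, which are precisely the ones supplied by the previous corollary. I do not foresee a serious obstacle; the only subtle point is the algebraic independence of the trace generators, and that is a soft consequence of Vinberg's freeness theorem combined with Shephard--Todd--Chevalley applied to $\C[\p]^K \hookrightarrow \C[\p]$.
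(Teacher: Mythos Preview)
Your proposal is correct and follows essentially the same route as the paper, whose proof is literally ``Immediate from above discussion.'' You supply more detail than the paper does on why $\C[\p]^K$ is a polynomial ring in generators of degrees $k,2k,\dots,nk$; note that this is already part of Vinberg's theorem (the invariants are always a polynomial ring for a $\theta$-group), so you need not invoke Shephard--Todd--Chevalley, which strictly speaking concerns finite reflection groups rather than this situation.
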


\begin{proof}
    Immediate from above discussion.
\end{proof}

\begin{cor}
    For $\nu$ a $K$ irrep, if $\sum_{i=1}^k |\nu_i^+| > n$ or if $\sum_{i=1}^k |\nu_i^-| > n$, then $m_\nu(q) = 0$ in the stable range.
\end{cor}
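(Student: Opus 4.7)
The plan is to extract the degree constraint inherent in the Littlewood--Richardson coefficients appearing in the formula of Corollary \ref{cor-gradedHchar}, and then observe that multiplication by $\prod_{i=1}^n(1-q^{ki})$ preserves the lowest-degree contribution.

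Recall that $c^{\lambda}_{\mu,\nu} = 0$ unless $|\lambda| = |\mu| + |\nu|$. Applied to the formula in Corollary \ref{cor-gradedHchar}, non-vanishing of $c^{\lambda_i}_{\alpha_i,\nu_i^+}$ forces $|\lambda_i| = |\alpha_i| + |\nu_i^+|$, and non-vanishing of $c^{\lambda_{i-1}}_{\alpha_i,\nu_i^-}$ forces $|\lambda_{i-1}| = |\alpha_i| + |\nu_i^-|$. Summing the first relation over $i = 1,\dots,k$ yields
\[
\sum_{i=1}^k |\lambda_i| \;=\; \sum_{i=1}^k |\alpha_i| + \sum_{i=1}^k |\nu_i^+| \;\geq\; \sum_{i=1}^k |\nu_i^+|,
\]
since each $|\alpha_i| \geq 0$. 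Summing the second relation over $i$ and using that the indexing on $\lambda_{i-1}$ is merely a cyclic relabeling (so $\sum_i |\lambda_{i-1}| = \sum_i |\lambda_i|$), we obtain analogously $\sum_i |\lambda_i| \geq \sum_i |\nu_i^-|$.

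Therefore, if either $\sum_i |\nu_i^+| > n$ or $\sum_i |\nu_i^-| > n$, every term in the inner sum
\[
\sum_{\alpha_i,\lambda_i} q^{\sum |\lambda_i|}\prod_{i=1}^k c^{\lambda_i}_{\alpha_i,\nu_i^+}\,c^{\lambda_{i-1}}_{\alpha_i,\nu_i^-}
\]
with nonzero product of LR coefficients contributes only to powers $q^d$ with $d > n$. Multiplication by $\prod_{i=1}^n(1-q^{ki})$, a polynomial with constant term $1$, cannot decrease the minimal degree of a power series, so the resulting expression for $m_\nu(q)$ likewise has no terms of degree $\leq n$. In the stable range this says precisely $m_\nu(q) = 0$.

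There is no serious obstacle here: the entire content is the elementary observation that $|\lambda|=|\mu|+|\nu|$ is forced by the LR coefficients, combined with the cyclic reindexing $\sum_i |\lambda_{i-1}| = \sum_i |\lambda_i|$ that is specific to the cyclic quiver setup.
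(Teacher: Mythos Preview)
Your proof is correct and follows essentially the same approach as the paper: both use the size constraint $|\lambda|=|\mu|+|\nu|$ for nonvanishing Littlewood--Richardson coefficients to bound $\sum_i|\lambda_i|$ from below by $\sum_i|\nu_i^+|$ (and symmetrically by $\sum_i|\nu_i^-|$). Your version is slightly more explicit in that you spell out the cyclic reindexing for the $\nu_i^-$ case and note why multiplying by $\prod_{i=1}^n(1-q^{ki})$ cannot lower the minimal degree, whereas the paper leaves these points implicit.
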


\begin{proof}
    Notice in the formula of Corollary \ref{cor-gradedHchar}, the smallest degrees come from the $q^{\sum |\lambda_i|}$ terms. Now, by basic properties of Littlewood-Richardson coefficients, if the term $q^{\sum |\lambda_i|}\prod_{i=1}^k
    c^{\lambda_i}_{\alpha_i, \nu_i^+} c^{\lambda_{i-1}}_{\alpha_i, \nu_i^-}$ is not zero, $|\lambda_i| \geq |\nu_i^+|$ for all $i$, but then $q^{\sum |\lambda_i|} \geq q^{\sum |\nu_i^+|} > q^{n}$. So $m_\nu(q)$ is 0 in degree less than or equal to $n$.
\end{proof}

We now turn our attention to stable multiplicities and make the following key definition.
\begin{defn}
    $$m_\nu^\infty(q,k) = [\prod_{i=1}^\infty(1-q^{ki})]\sum_{\alpha_i, \lambda_i} q^{\sum |\lambda_i|}\prod_{i=1}^k
    c^{\lambda_i}_{\alpha_i, \nu_i^+} c^{\lambda_{i-1}}_{\alpha_i, \nu_i^-},$$
    
\noindent where the sum is taken over all partitions $\{\alpha_i, \lambda_i\}_{i=1}^k$ in $\Par$. This is the stable $q$-multiplicity for $\nu$ on a quiver of length $k$.
\end{defn}

It is easy to see that $m_\nu^\infty(q,k) = m_\nu(q)$ up to degree $n$. These stable $q$-multiplicities will be our focus for the remainder of the paper.

We would like to cancel the $[\prod_{i=1}^\infty(1-q^{ki})]$ factor from the formula for $m_\nu^\infty(q,k)$. We recall, see \cite{frohmader2023graded} for example, that $c^{\lambda}_{\alpha, \nu} = |\CLR^\lambda_{\alpha, \nu}| := |\{T \in SST(\nu) \ | \ \alpha \geq \varepsilon(T) \text{ and } \alpha + \wt(T) = \lambda \}|$. Here we are viewing $SST(\nu)$ as a $\gl_\infty$ crystal with Kashiwara operators $\tilde{e}_i$ and $\tilde{f}_i$ for $i = 1, 2, \dots $ and we define $\varepsilon_i(T) = \max \{ k \geq 0 \ | \ \tilde{e}_i^k T \in SST(\lambda) \}$, $\phi_i(T) = \max \{ k \geq 0 \ | \ \tilde{f}_i^k T \in SST(\lambda) \}$, and 

\begin{align*}
\phi(T) &= \sum_{i=1}^{n-1}\phi_i(T)\omega_i,   &   \varepsilon(T) &= \sum_{i=1}^{n-1}\varepsilon_i(T)\omega_i.    
\end{align*}

In this notation, we have,

$$m_\nu^\infty(q,k) = [\prod_{i=1}^\infty(1-q^{ki})]\sum_{\alpha_i, \lambda_i} q^{\sum |\lambda_i|}\prod_{i=1}^k
    |\CLR^{\lambda_i}_{\alpha_i, \nu_i^+}| |\CLR^{\lambda_{i-1}}_{\alpha_i, \nu_i^-}|.$$

Notice the formula for $m_\nu^\infty(q,k)$ has $\nu_i^\pm$ fixed for all $i$, so we are just computing various subsets of $\underbar{SST}(\nu) := \prod_{i=1}^k [SST(\nu_i^+) \times SST(\nu_i^-)]$. The key is to understand which $T = (T_1^+,T_1^-, \dots , T_k^+, T_k^-)\in \underbar{SST}(\nu)$ appear in some $\underbar{\CLR}^\lambda_{\alpha, \nu} := \prod_{i=1}^k \CLR^{\lambda_i}_{\alpha_i, \nu_i^+} \times \CLR^{\lambda_{i-1}}_{\alpha_i, \nu_i^-}$ and with what multiplicity. In this context, $\lambda = (\lambda_1, \dots , \lambda_k), \alpha = (\alpha_1, \dots \alpha_k),$ and $\nu = (\nu_1^\pm, \dots, \nu_k^\pm)$ are tuples of partitions.

As $T_i = (T_i^+, T_i^-)$ is associated with the rational $\GL_{n_i}$ representation $F^{\nu_i^+, \nu_i^-}_{n_i}$, let $\wt(T_i) := \wt(T_i^+) - \wt(T_i^-)$. Also denote the set of all $k$-tuples of tableaux $\Par^k$. We first isolate those $T$ contributing with 

\begin{defn}
    A tuple of tableaux $T\in \underbar{SST}(\nu)$ is called distinguished if $T \in \underbar{\CLR}^\lambda_{\alpha, \nu}$ for some $\lambda, \alpha \in \Par^k$.
\end{defn}

\begin{defn}
    Let $D(\nu)$ be the set of all distinguished tableaux in $\underbar{SST}(\nu)$.
\end{defn}

\begin{lemma} \label{lemma-alphalambda}
    Suppose $T\in \underbar{\CLR}^\lambda_{\alpha, \nu}$. Then,

    $$\alpha_i = \lambda_1 - \wt(T_i^+) + \sum_{j=2}^i \wt(T_j),$$
    $$\lambda_i = \lambda_1 + \sum_{j=2}^{i} \wt(T_j),$$
    $$\sum_{j=1}^k \wt(T_j) = 0,$$

    for all $\alpha_i$ and $\lambda_i$. In particular, $\alpha_i$ and $\lambda_i$ are uniquely determined by $\lambda_1$ and $T$. With $T$ fixed, let $\lambda(\lambda_1)$ and $\alpha(\lambda_1)$ be those elements of $\Par^k$ determined by $\lambda_1$.
\end{lemma}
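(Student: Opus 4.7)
The plan is to unpack the definition of $\underbar{\CLR}^\lambda_{\alpha,\nu}$ factor-by-factor, eliminate $\alpha_i$, and observe that the cyclic indexing gives the zero-sum constraint automatically.

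First I would recall that by definition $T_i^+ \in \CLR^{\lambda_i}_{\alpha_i,\nu_i^+}$ forces $\alpha_i + \wt(T_i^+) = \lambda_i$ and $T_i^- \in \CLR^{\lambda_{i-1}}_{\alpha_i, \nu_i^-}$ forces $\alpha_i + \wt(T_i^-) = \lambda_{i-1}$. Subtracting these two identities eliminates $\alpha_i$ and yields
\begin{equation*}
\lambda_i - \lambda_{i-1} = \wt(T_i^+) - \wt(T_i^-) = \wt(T_i),
\end{equation*}
valid for every $i$ (read mod $k$). Iterating this recursion starting from $\lambda_1$ gives $\lambda_i = \lambda_1 + \sum_{j=2}^i \wt(T_j)$, which is the second displayed formula.

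Next I would extract the zero-sum constraint from the cyclic indexing. Applying the recursion $\lambda_i - \lambda_{i-1} = \wt(T_i)$ for $i=1$ interprets $\lambda_0$ as $\lambda_k$, so $\lambda_k = \lambda_1 - \wt(T_1)$. On the other hand, taking $i=k$ in the formula just derived gives $\lambda_k = \lambda_1 + \sum_{j=2}^k \wt(T_j)$. Equating these two expressions for $\lambda_k$ cancels $\lambda_1$ and produces $\sum_{j=1}^k \wt(T_j) = 0$, the third displayed formula. Finally I would substitute $\lambda_i = \lambda_1 + \sum_{j=2}^i \wt(T_j)$ into the relation $\alpha_i = \lambda_i - \wt(T_i^+)$ to obtain $\alpha_i = \lambda_1 - \wt(T_i^+) + \sum_{j=2}^i \wt(T_j)$, the first displayed formula.

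This is essentially a bookkeeping argument, so I do not expect a serious obstacle; the only point requiring care is the cyclic convention that $\lambda_0$ means $\lambda_k$, which is precisely what converts the telescoping recursion into the closed zero-sum identity $\sum_j \wt(T_j) = 0$. Uniqueness of $\lambda_i$ and $\alpha_i$ in terms of $\lambda_1$ and $T$ is then immediate from the explicit formulas, justifying the notation $\lambda(\lambda_1)$ and $\alpha(\lambda_1)$ introduced at the end of the statement.
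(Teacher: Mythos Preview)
Your proof is correct and follows essentially the same approach as the paper: both extract the relations $\alpha_i + \wt(T_i^+) = \lambda_i$ and $\alpha_i + \wt(T_i^-) = \lambda_{i-1}$ from the definition of $\CLR$, derive the recursion $\lambda_i - \lambda_{i-1} = \wt(T_i)$, iterate to get the formula for $\lambda_i$, use the cyclic wrap-around at $i=1$ (equivalently, the factor $\CLR^{\lambda_1}_{\alpha_1,\nu_1^+}\times \CLR^{\lambda_k}_{\alpha_1,\nu_1^-}$) to obtain the zero-sum identity, and then back-substitute for $\alpha_i$. The only difference is cosmetic: the paper phrases the iteration as an explicit induction passing through $\alpha_i$, whereas you subtract first and telescope.
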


\begin{proof}
    We begin by establishing the formula for $\lambda_i$. Proceed by induction. The base case is clear. Now assume the formula holds for $\lambda_{i-1}$ with $1 < i \leq k$. From the term $\CLR^{\lambda_i}_{\alpha_i, \nu_i^+} \times \CLR^{\lambda_{i-1}}_{\alpha_i, \nu_i^-}$ we see $\alpha_i = \lambda_{i-1} - \wt(T_i^-)$ so by induction, $\alpha_i = \lambda_1 + \sum_{j=2}^{i-1}\wt(T_j) - \wt(T_i^-)$ and $\lambda_i = \alpha_i + \wt(T_i^+) = \lambda_1 + \sum_{j=2}^i\wt(T_j)$.

    Next, we establish the third equality. We have, 
    
    $$\lambda_k = \lambda_1 + \sum_{j=2}^{k} \wt(T_j)$$
    
    Notice from the $ \CLR^{\lambda_1}_{\alpha_1, \nu_1^+} \times \CLR^{\lambda_{k}}_{\alpha_1, \nu_1^-}$ factor, we also have,

    $$\lambda_k = \wt(T_1^-) + \alpha_1.$$
    $$\lambda_1 = \wt(T_1^+) + \alpha_1.$$

    Subtracting the two expressions for $\lambda_k$ yields

    $$\sum_{j=1}^k \wt(T_j) = 0.$$

    Finally, from the term $\CLR^{\lambda_i}_{\alpha_i, \nu_i^+}$ we see $\alpha_i = \lambda_i - \wt(T_i^+) = \lambda_1 + \sum_{j=2}^{i} \wt(T_j) - \wt(T_i^+)$. 
    
\end{proof}

Lemma \ref{lemma-alphalambda} shows that with $T$ fixed, there is at most a 1-parameter family of $\underbar{\CLR}^\lambda_{\alpha, \nu}$ containing $T$. We choose to parameterize this family by $\lambda_1$, but note that any choice of a fixed $\lambda_i$ or $\alpha_j$ uniquely constrains  $\underbar{\CLR}^\lambda_{\alpha, \nu}$ and could be used as parameter. The lemma below shows that the cyclic nature of the representation constrains the set of distinguished tableaux.

\begin{lemma} \label{lemma-distingchar}
    $T \in \underbar{SST}(\nu)$ is distinguished if and only if $\sum_{j=1}^k \wt(T_j) = 0$.
\end{lemma}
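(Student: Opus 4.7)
The plan is to handle the two directions of Lemma \ref{lemma-distingchar} separately. The forward direction is immediate from Lemma \ref{lemma-alphalambda}: if $T \in \underbar{\CLR}^\lambda_{\alpha, \nu}$ for some $\lambda, \alpha \in \Par^k$, then the third displayed equality of that lemma gives $\sum_{j=1}^k \wt(T_j) = 0$. The substance of the statement is the converse.

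For the converse, I fix $T \in \underbar{SST}(\nu)$ with $\sum_{j=1}^k \wt(T_j) = 0$ and construct witnesses $\lambda, \alpha \in \Par^k$ for membership of $T$ in $\underbar{\CLR}^\lambda_{\alpha, \nu}$. Guided by Lemma \ref{lemma-alphalambda}, I would treat $\lambda_1$ as a free parameter and define
$$\lambda_i := \lambda_1 + \sum_{j=2}^i \wt(T_j), \qquad \alpha_i := \lambda_i - \wt(T_i^+)$$
for $i = 1, \dots, k$. A routine check gives $\alpha_i + \wt(T_i^+) = \lambda_i$ by construction and $\alpha_i + \wt(T_i^-) = \lambda_{i-1}$ for $i = 2, \dots, k$. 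The cyclic wrap-around equation $\alpha_1 + \wt(T_1^-) = \lambda_k$ rearranges to $\lambda_1 - \wt(T_1) = \lambda_1 + \sum_{j=2}^k \wt(T_j)$, which is precisely the hypothesis $\sum_{j=1}^k \wt(T_j) = 0$. Hence all the weight equations required for membership in $\underbar{\CLR}^\lambda_{\alpha, \nu}$ hold automatically.

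The remaining task is to choose $\lambda_1$ so that each $\lambda_i, \alpha_i$ is a genuine partition and so that the crystal inequalities $\alpha_i \geq \varepsilon(T_i^+)$ and $\alpha_i \geq \varepsilon(T_i^-)$ hold in the extended product order on $\Z^\infty$. In the $\omega$-basis these amount to finitely many coordinatewise inequalities involving the fixed finite-support vectors $\wt(T_j^\pm)$ and $\varepsilon(T_j^\pm)$. Letting $N$ exceed all of their supports and $M$ exceed the sum of absolute values of all the $\omega$-coefficients that appear, I would take $\lambda_1 = M(\omega_1 + \omega_2 + \cdots + \omega_N)$. This $\lambda_1$ is itself a partition, and each $\lambda_i$ and $\alpha_i$ then has $\omega$-coefficients bounded below by a positive number in positions $1, \dots, N$ and equal to $0$ beyond, placing it in $\Par$; the same bound forces coordinatewise dominance of each $\varepsilon(T_i^\pm)$, completing the construction.

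The main obstacle is purely bookkeeping: one must verify that a single sufficiently large $\lambda_1$ simultaneously enforces partition status for all $2k$ vectors $\lambda_i, \alpha_i$ and all $2k$ crystal comparisons $\alpha_i \geq \varepsilon(T_i^\pm)$. There is no additional combinatorial depth beyond Lemma \ref{lemma-alphalambda}; the cyclic closure condition $\sum_{j=1}^k \wt(T_j) = 0$ is the only non-trivial constraint, and the inflation argument above shows it is sufficient as well as necessary.
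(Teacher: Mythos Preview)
Your proof is correct and follows essentially the same route as the paper: both invoke Lemma~\ref{lemma-alphalambda} for the forward direction, and for the converse both parameterize by $\lambda_1$ via the formulas of that lemma, observe that the cyclic compatibility equation reduces to the hypothesis $\sum_j \wt(T_j)=0$, and then argue that a sufficiently large $\lambda_1$ forces all the inequalities $\alpha_i \geq \varepsilon(T_i^\pm)$. The only difference is that the paper takes $\lambda_1$ to be the coordinatewise supremum $\sup\{\varepsilon(T_i^\pm) + \wt(T_i^+) - \sum_{j=2}^{i} \wt(T_j)\}_{i=1}^k$ rather than your large rectangle $M(\omega_1+\cdots+\omega_N)$; this sharper choice is then named $\lambda_{\min}(T)$ and drives the subsequent lemmas, but for the present statement your cruder bound is equally valid.
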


\begin{proof}
    Say $T$ is distinguished. Then $T \in \underbar{\CLR}^\lambda_{\alpha, \nu}$ for some $\lambda$ and $\alpha$ so by Lemma \ref{lemma-alphalambda}, $0 = \sum_{j=1}^{k} \wt(T_j)$.

    Now suppose $\sum_{i=1}^k \wt(T_i) = 0$. We must show $T \in \underbar{\CLR}^\lambda_{\alpha, \nu}$ for some $\lambda, \alpha \in \Par^k$. To do this, we require two things. First, $\alpha_i \geq \varepsilon(T_i^+)$ and $\alpha_i \geq \varepsilon(T_i^-)$ for all $i$. This ensures $T_i^+ \in \CLR_{\alpha_i, \nu_i^+}^{\alpha_i+\wt(T_i^+)}$ and similarly for $T_i^-$. Second, we have to make sure the $\lambda_i$ are compatible, that is the two formulas for $\lambda_i$, $\lambda_i = \alpha_i + \wt(T_i^+)$ and $\lambda_i = \alpha_{i+1}+ \wt(T_{i+1}^-)$ are equal.

    By Lemma \ref{lemma-alphalambda}, to achieve $\lambda_i$ compatibility, we must have $\lambda_i = \lambda_1 + \sum_{j=2}^{i} \wt(T_j)$ for $i > 1$ and $\sum_{j=1}^k \wt(T_j) = 0$, i.e. we are constrained to work within the family parameterized by $\lambda_1$. The proof comes down to showing this family is not empty by selecting a $\lambda_1$ large enough that $\alpha_i \geq \varepsilon(T_i^+)$ and $\alpha_i \geq \varepsilon(T_i^-)$ for all $i$. As $\alpha_i = \lambda_1 - \wt(T_i^+) + \sum_{j=2}^{i} \wt(T_j)$, this can be achieved by selecting $\lambda_1 \geq \sup \{\varepsilon(T_i^\pm) + \wt(T_i^+) - \sum_{j=2}^{i} \wt(T_j)\}_{i=1}^k$. Indeed, then 
    
    $$\alpha_i = \lambda_1 - \wt(T_i^+) + \sum_{j=2}^{i} \wt(T_j)$$

    $$\geq [\varepsilon(T_i^\pm) + \wt(T_i^+) - \sum_{j=2}^{i} \wt(T_j)] - \wt(T_i^+) + \sum_{j=2}^{i} \wt(T_j)$$
    $$= \varepsilon(T_i^\pm).$$
    
    So we have $\alpha_i \geq \varepsilon(T_i^\pm)$, which shows $\lambda_i$ and $\alpha_i$ are partitions and hence $T$ is contained in $\underbar{\CLR}^\lambda_{\alpha, \nu}$.

    \end{proof}

\begin{defn}
    We isolate a least upper bound from the proof of Lemma \ref{lemma-distingchar} in this definition. For $T \in D(\nu)$ define $\lambda_{\min}(T) = \sup \{\varepsilon(T_i^\pm) + \wt(T_i^+) - \sum_{j=2}^{i} \wt(T_j)\}_{i=1}^k$.
\end{defn}

$\lambda_{\min}$ exists. It can be explicitly constructed as follows. Notice we can work in $\Par_N$ if we choose $N$ large enough. Writing each $S^\pm_i = \varepsilon(T_i^\pm) + \wt(T_i^+) - \sum_{j=2}^{i} \wt(T_j)$ in terms of the $\omega_i$ basis as $S^\pm_j = a^\pm_{1 j}\omega_1 = \dots + a^\pm_{N j} \omega_{N}$. Set $a_i = \max \{a^\pm_{i 1}, \dots , a^\pm_{i k} \}$, that is $a_i$ is the maximum coefficient of $\omega_i$ across the $S^\pm_i$. Then $\lambda_{\min}(T) = a_1 \omega_1 + \dots + a_{N} \omega_{N}$. Notice also that $S_1^+ = \varepsilon(T_1^+) + \wt(T_1^+)$ is a partition by the tensor product rule for crystals, that is $S_1^+ = a^+_{11} \omega_1 + \dots + a^+_{N 1} \omega_N$ with $a_{1i} \in \Z_{\geq 0}$ for all $i$. Hence, $a_i \geq 0$ for all $i$.

Next, we give a name to the set of partitions parameterizing the $\underbar{\CLR}^\lambda_{\alpha, \nu}$ containing T. 

\begin{defn}
    For $T \in D(\nu)$ let $S_T$ be the set of all $\lambda_1 \in \Par$ such that $T \in \underbar{\CLR}^{\lambda(\lambda_1)}_{\alpha(\lambda_1), \nu}$.

\end{defn}

\begin{lemma}\label{lemma-TinCLR}
    For $T \in D(\nu)$, $T \in \underbar{\CLR}^{\lambda(\lambda_1)}_{\alpha(\lambda_1), \nu}$ if and only if $\lambda_1 \geq \lambda_{\min}(T)$.
\end{lemma}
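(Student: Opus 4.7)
The plan is to unpack the definition of $\underbar{\CLR}^{\lambda(\lambda_1)}_{\alpha(\lambda_1),\nu}$ and reduce the membership condition to the coordinate comparison that defines $\lambda_{\min}(T)$. First, observe that by Lemma \ref{lemma-alphalambda} the tuples $\lambda(\lambda_1)$ and $\alpha(\lambda_1)$ are chosen precisely so that the weight equalities $\alpha_i + \wt(T_i^+) = \lambda_i$ and $\alpha_i + \wt(T_i^-) = \lambda_{i-1}$ hold identically for every $i$, where cyclic consistency at $i=1$ uses that $T$ is distinguished, i.e. $\sum_j \wt(T_j) = 0$. Consequently, the only conditions remaining to verify for $T \in \underbar{\CLR}^{\lambda(\lambda_1)}_{\alpha(\lambda_1),\nu}$ are (i) the crystal inequalities $\alpha_i \geq \varepsilon(T_i^+)$ and $\alpha_i \geq \varepsilon(T_i^-)$ for each $i$, and (ii) the requirement that each $\alpha_i$ and each $\lambda_i$ is actually a partition rather than just an element of $\Z^\infty$.

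For the only-if direction, if $T$ lies in $\underbar{\CLR}^{\lambda(\lambda_1)}_{\alpha(\lambda_1),\nu}$ then (i) holds. Substituting $\alpha_i = \lambda_1 - \wt(T_i^+) + \sum_{j=2}^i \wt(T_j)$ and rearranging converts each inequality in (i) into $\lambda_1 \geq \varepsilon(T_i^\pm) + \wt(T_i^+) - \sum_{j=2}^i \wt(T_j)$. Taking the supremum over $i$ and over both signs yields $\lambda_1 \geq \lambda_{\min}(T)$ directly from the definition.

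For the if direction, assume $\lambda_1 \geq \lambda_{\min}(T)$. The same rearrangement in reverse recovers (i). For (ii), I would invoke the standard $\gl_\infty$ crystal identity $\phi(T) = \varepsilon(T) + \wt(T)$, which ensures that $\varepsilon(T_i^+) + \wt(T_i^+) = \phi(T_i^+)$ has all nonnegative coordinates in the $\omega$-basis. Then $\alpha_i \geq \varepsilon(T_i^+) \geq 0$ in the product order shows $\alpha_i$ is a partition, while $\lambda_i = \alpha_i + \wt(T_i^+) \geq \varepsilon(T_i^+) + \wt(T_i^+) = \phi(T_i^+) \geq 0$ shows $\lambda_i$ is a partition. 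The argument for the $(-)$-side is symmetric.

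The expected main obstacle is step (ii): the crystal inequality $\alpha_i \geq \varepsilon(T_i^\pm)$ only guarantees nonnegativity of $\alpha_i$ in the product order, and it is not immediately obvious that $\lambda_i = \alpha_i + \wt(T_i^+)$ remains in $\Par$ after adding the possibly non-dominant weight $\wt(T_i^+)$. The crystal identity $\phi = \varepsilon + \wt$ is the key tool closing this gap; everything else amounts to bookkeeping with the formulas from Lemma \ref{lemma-alphalambda}.
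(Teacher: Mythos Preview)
Your proposal is correct and follows essentially the same route as the paper, which simply defers to the proof of Lemma~\ref{lemma-distingchar}: the membership condition reduces to $\alpha_i \geq \varepsilon(T_i^\pm)$, and after substituting the formula for $\alpha_i$ this is exactly $\lambda_1 \geq \lambda_{\min}(T)$. Your explicit use of the crystal identity $\phi = \varepsilon + \wt$ to check that $\lambda_i$ and $\alpha_i$ land in $\Par$ makes precise a step the paper handles in one terse sentence (``which shows $\lambda_i$ and $\alpha_i$ are partitions''), and matches the observation the paper records just after the definition of $\lambda_{\min}(T)$ that $\varepsilon(T_1^+) + \wt(T_1^+)$ is a partition.
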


\begin{proof}
    This follows from the proof of Lemma \ref{lemma-distingchar}.
\end{proof}

\begin{lemma}
    For $T \in D(\nu)$, $S_T = \lambda_{\min}(T) + \Par.$
\end{lemma}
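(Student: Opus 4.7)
The plan is to reduce the statement to Lemma \ref{lemma-TinCLR} together with the definition of the partial order on $\Par$ given in the Background. By definition, $S_T = \{\lambda_1 \in \Par : T \in \underline{\CLR}^{\lambda(\lambda_1)}_{\alpha(\lambda_1),\nu}\}$, and Lemma \ref{lemma-TinCLR} identifies this with $\{\lambda_1 \in \Par : \lambda_1 \geq \lambda_{\min}(T)\}$. Recall that $\mu \leq \lambda$ on $\Par$ was defined precisely by $\lambda - \mu \in \Par$, so this last set equals $\{\lambda_{\min}(T) + \mu : \mu \in \Par\} \cap \Par$.

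Next I would observe that the intersection with $\Par$ is automatic, i.e. $\lambda_{\min}(T) + \Par \subseteq \Par$. This uses the remark following the definition of $\lambda_{\min}(T)$, which notes that when one writes each $S_i^\pm = \varepsilon(T_i^\pm) + \wt(T_i^+) - \sum_{j=2}^i \wt(T_j)$ in the $\omega$-basis and takes coefficient-wise maxima, the resulting expression $\lambda_{\min}(T) = a_1\omega_1 + \cdots + a_N\omega_N$ satisfies $a_i \geq 0$ for all $i$ (because the entry from $S_1^+ = \varepsilon(T_1^+) + \wt(T_1^+)$, which is a genuine partition by the tensor product rule for crystals, already forces nonnegative $\omega$-coefficients). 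Hence $\lambda_{\min}(T) \in \Par$, and adding any partition to it (in the $\omega$-basis, which corresponds to concatenation of columns) keeps us in $\Par$.

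Combining the two containments, $\lambda_1 \in S_T$ iff $\lambda_1 \geq \lambda_{\min}(T)$ iff $\lambda_1 - \lambda_{\min}(T) \in \Par$ iff $\lambda_1 \in \lambda_{\min}(T) + \Par$, which gives the claimed equality. I do not anticipate a real obstacle here: once one unwinds the definition of $\leq$ on $\Par$ (chosen at the start of Section 2.2 specifically so that it is just the product order on $\omega$-coordinates in $\Z^\infty$), the proof is one line of set-theoretic rewriting on top of Lemma \ref{lemma-TinCLR}. The only place care is needed is in confirming that $\lambda_{\min}(T)$ itself is a partition, which is exactly the content of the paragraph immediately after its definition.
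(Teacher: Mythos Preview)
Your proposal is correct and follows essentially the same approach as the paper's proof: both reduce to Lemma~\ref{lemma-TinCLR} and then unwind the definition of the partial order so that $\lambda_1 \geq \lambda_{\min}(T)$ becomes $\lambda_1 - \lambda_{\min}(T) \in \Par$. Your write-up is in fact more careful than the paper's, since you explicitly invoke the observation that $\lambda_{\min}(T) \in \Par$ (from the paragraph following its definition) to justify $\lambda_{\min}(T) + \Par \subseteq \Par$, a step the paper's one-line proof leaves implicit.
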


\begin{proof}
    This follows from Lemma \ref{lemma-TinCLR} by observing that $\lambda_{\min}(T)$ is the unique minimal element in $S_T$ so for any $\delta \in S_T$ we can write $\delta = \lambda_{\min}(T) + (\delta - \lambda_{\min}(T))$.
\end{proof}

Hence, for $T \in D(\nu)$, we have a 1-parameter family of $\underbar{\CLR}^\lambda_{\alpha, \nu}$ containing $T$, now parameterized by $\delta \in \Par$. We define the following functions

$$\lambda_i(T, \delta) = \lambda_{\min}(T) + \delta + \sum_{j=2}^{i} \wt(T_j),$$
$$\alpha_i(T, \delta) = \lambda_{\min}(T) + \delta - \wt(T_i^+) + \sum_{j=2}^{i} \wt(T_j).$$

Then this family can be written explicitly as 

$$ \{\prod_{i=1}^k \CLR^{\lambda_i(T, \delta)}_{\alpha_i(T, \delta), \nu_i^+} \times \CLR^{\lambda_{i-1}(T,\delta)}_{\alpha_i(T,\delta), \nu_i^-} \ : \  \delta \in \Par \} $$

\begin{lemma} \label{lemma-lambdaiadditive}
    $$\lambda_i(T, \delta) = \lambda_i(T, \varnothing) + \delta$$
\end{lemma}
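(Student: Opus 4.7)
The plan is to prove this by direct substitution into the definition of $\lambda_i(T,\delta)$. Recall that the author has just defined
\begin{equation*}
\lambda_i(T,\delta) = \lambda_{\min}(T) + \delta + \sum_{j=2}^{i}\wt(T_j),
\end{equation*}
so the identity is essentially a restatement of this formula with the $\delta$-dependence isolated. The key is simply to evaluate at $\delta = \varnothing$ (viewed as the zero element of the ambient $\Z^\infty$, as is standard in the $\omega_i$-basis discussion from the Background section) and then compare.

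First I would compute $\lambda_i(T,\varnothing) = \lambda_{\min}(T) + \sum_{j=2}^{i}\wt(T_j)$ directly from the definition, noting that the empty partition contributes $0$ in each coordinate of the $\omega_i$-basis. Next I would subtract this from the general expression for $\lambda_i(T,\delta)$ and observe that the sum $\sum_{j=2}^i \wt(T_j)$ cancels, leaving exactly $\delta$. Rearranging yields the claim.

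There is essentially no obstacle here; the lemma is a bookkeeping identity that isolates the additive dependence on the parameter $\delta$. I would expect its purpose to be downstream: it will later allow the author to factor a sum of the form $\sum_{\delta \in \Par} q^{|\delta|} \cdot (\text{stuff independent of } \delta)$ out of the expression for $m_\nu^\infty(q,k)$, so that the troublesome factor $\prod_{i=1}^\infty(1-q^{ki})$ can be cancelled against a generating function for partitions. Thus the short proof I would write is:

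\begin{proof}[Proof sketch]
By definition, $\lambda_i(T,\varnothing) = \lambda_{\min}(T) + \sum_{j=2}^{i}\wt(T_j)$, since the empty partition corresponds to the zero tuple in the $\omega_i$-basis. Substituting into the definition of $\lambda_i(T,\delta)$ gives
\begin{equation*}
\lambda_i(T,\delta) = \lambda_{\min}(T) + \delta + \sum_{j=2}^{i}\wt(T_j) = \Bigl(\lambda_{\min}(T) + \sum_{j=2}^{i}\wt(T_j)\Bigr) + \delta = \lambda_i(T,\varnothing) + \delta,
\end{equation*}
as claimed.
\end{proof}
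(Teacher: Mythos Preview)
Your proof is correct and follows essentially the same approach as the paper: both simply expand the definition of $\lambda_i(T,\delta)$, note that the $\delta$-term separates additively from the rest, and identify the remaining expression with $\lambda_i(T,\varnothing)$. Your remark about the downstream purpose (factoring out $\sum_{\delta\in\Par} q^{k|\delta|}$ to cancel the invariant factor) is also exactly how the paper uses this lemma.
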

\begin{proof}
    $$\lambda_i(T, \delta) = \lambda_{\min}(T) + \delta + \sum_{j=2}^{i} \wt(T_j),$$
    $$= \lambda_{\min}(T) + \varnothing + \sum_{j=2}^{i} \wt(T_j) + \delta,$$
    $$ = \lambda_i(T, \varnothing) + \delta.$$
\end{proof}

Denote $\lambda_i(T, \varnothing)$ by $\lambda_i(T)$ for simplicity. We isolate the following key lemma which should be viewed as a combinatorial separation of variables. 

\begin{lemma}
    $$\frac{1}{\prod_{i=1}^\infty(1-q^{ki})} m_\nu^\infty(q,k) = \sum_{\delta \in \Par} q^{k|\delta|} \sum_{T \in D(\nu)} q^{\sum_{i=1}^k |\lambda_i(T)|}$$
\end{lemma}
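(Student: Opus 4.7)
The plan is to divide both sides by $\prod_{i=1}^\infty(1-q^{ki})$ and reinterpret the resulting sum combinatorially. Using $c^{\lambda}_{\alpha,\nu} = |\CLR^\lambda_{\alpha,\nu}|$, the product $\prod_{i=1}^k |\CLR^{\lambda_i}_{\alpha_i, \nu_i^+}|\,|\CLR^{\lambda_{i-1}}_{\alpha_i, \nu_i^-}|$ is precisely the number of $T \in \underbar{SST}(\nu)$ lying in $\underbar{\CLR}^\lambda_{\alpha,\nu}$. Interchanging the sum over tableaux with the sum over $(\alpha,\lambda)$ gives
$$\frac{m_\nu^\infty(q,k)}{\prod_{i=1}^\infty(1-q^{ki})} \;=\; \sum_{T \in \underbar{SST}(\nu)} \sum_{\substack{\alpha,\lambda \in \Par^k \\ T \in \underbar{\CLR}^\lambda_{\alpha,\nu}}} q^{\sum_{i=1}^k |\lambda_i|}.$$
By the definition of $D(\nu)$ (equivalently, by Lemma \ref{lemma-distingchar}), the inner sum is empty unless $T \in D(\nu)$, so the outer sum collapses to $T \in D(\nu)$.

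Next, I would fix $T \in D(\nu)$ and use the parameterization already developed. By Lemma \ref{lemma-alphalambda}, the tuple $(\alpha,\lambda)$ is determined by $T$ together with the single partition $\lambda_1$. Lemma \ref{lemma-TinCLR} and the identification $S_T = \lambda_{\min}(T) + \Par$ then assert that the admissible values of $\lambda_1$ are in bijection with $\delta \in \Par$ via $\lambda_1 = \lambda_{\min}(T) + \delta$. Rewriting the inner sum under this bijection yields $\sum_{\delta \in \Par} q^{\sum_i |\lambda_i(T,\delta)|}$.

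Finally, Lemma \ref{lemma-lambdaiadditive} supplies the crucial additivity $\lambda_i(T,\delta) = \lambda_i(T) + \delta$, whence $|\lambda_i(T,\delta)| = |\lambda_i(T)| + |\delta|$ and
$$\sum_{i=1}^k |\lambda_i(T,\delta)| \;=\; \sum_{i=1}^k |\lambda_i(T)| \,+\, k|\delta|.$$
Splitting the $q$-exponent and interchanging the two outer sums produces
$$\sum_{T \in D(\nu)} \sum_{\delta \in \Par} q^{\sum_i |\lambda_i(T)| + k|\delta|} \;=\; \sum_{\delta \in \Par} q^{k|\delta|} \sum_{T \in D(\nu)} q^{\sum_{i=1}^k |\lambda_i(T)|},$$
which is the claimed identity. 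There is no serious obstacle: all the structural content lives in the earlier lemmas, and the remaining argument is a bookkeeping exercise that swaps the order of summation and invokes the additivity $\lambda_i(T,\delta) = \lambda_i(T) + \delta$. The one point to verify carefully is that the bijection between valid $(\alpha,\lambda)$ and $\delta \in \Par$ is uniform in $T$, which is exactly what defining $\lambda_{\min}(T)$ and $\lambda_i(T) := \lambda_i(T,\varnothing)$ was set up to guarantee.
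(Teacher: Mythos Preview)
Your proposal is correct and follows essentially the same route as the paper: reinterpret the left-hand side as a sum over $T \in D(\nu)$ and $\delta \in \Par$ via the parameterization $S_T = \lambda_{\min}(T) + \Par$, then apply Lemma \ref{lemma-lambdaiadditive} to split the exponent and swap the order of summation. If anything, you are more explicit than the paper in justifying the first equality $\frac{m_\nu^\infty(q,k)}{\prod_i(1-q^{ki})} = \sum_{T \in D(\nu)} \sum_{\delta \in \Par} q^{\sum_i |\lambda_i(T,\delta)|}$, which the paper simply asserts.
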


\begin{proof}

    $$\frac{1}{\prod_{i=1}^\infty(1-q^{ki})}m_\nu^\infty(q,k) = \sum_{T \in D(\nu)} \sum_{\delta \in \Par} q^{\sum_{i=1}^k |\lambda_i(T, \delta)|}$$
    
Now by Lemma \ref{lemma-lambdaiadditive}, 

    $$ = \sum_{T \in D(\nu)} \sum_{\delta \in \Par} q^{\sum_{i=1}^k |\lambda_i(T, \varnothing) + \delta|}$$

    $$= \sum_{T \in D(\nu)} \sum_{\delta \in \Par} q^{k|\delta|\sum_{i=1}^k |\lambda_i(T, \varnothing)|}$$

    $$= \sum_{\delta \in \Par} q^{k|\delta|} \sum_{T \in D(\nu)} q^{\sum_{i=1}^k |\lambda_i(T, \varnothing)|}$$
\end{proof}

From this, the main theorem is immediate. Cancel $\sum_{\delta \in \Par} q^{k|\delta|}$ with the invariants $1/\prod_{i=1}^\infty(1-q^{ki})$.

\begin{thm}\label{thm:main}
     $$m_\nu^\infty(q,k) = \sum_{T \in D(\nu)} q^{\sum_{i=1}^k |\lambda_i(T)|}$$
\end{thm}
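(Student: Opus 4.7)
The plan is to extract the theorem as an immediate consequence of the combinatorial separation of variables lemma proved just above. That lemma established
$$\frac{1}{\prod_{i=1}^\infty(1-q^{ki})}\, m_\nu^\infty(q,k) \;=\; \sum_{\delta \in \Par} q^{k|\delta|} \sum_{T \in D(\nu)} q^{\sum_{i=1}^k |\lambda_i(T)|},$$
so the only thing left is to cancel the denominator on the left against the $\delta$-sum on the right.

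The key observation is that $\sum_{\delta \in \Par} q^{k|\delta|}$ is exactly the generating function that cancels the invariant factor. By Euler's classical identity for the partition generating function,
$$\sum_{\mu \in \Par} q^{|\mu|} \;=\; \prod_{i=1}^\infty \frac{1}{1-q^{i}},$$
and substituting $q \mapsto q^k$ yields
$$\sum_{\delta \in \Par} q^{k|\delta|} \;=\; \prod_{i=1}^\infty \frac{1}{1-q^{ki}}.$$
Multiplying both sides of the separation of variables lemma by $\prod_{i=1}^\infty(1-q^{ki})$ therefore kills the invariant prefactor on the left and the $\delta$-sum on the right simultaneously, leaving the desired identity.

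There is no substantive obstacle at this final step; it is a one-line rearrangement using Euler's identity. All of the real difficulty has been absorbed into the preceding development: the $K^2$-decomposition in Theorem \ref{thm-biggroupdecomp}, the application of the stable branching rule of Theorem \ref{thm-stablebranching}, the translation of Littlewood--Richardson coefficients into counts of tableaux via the $\gl_\infty$ crystal structure, the characterization of distinguished tuples by $\sum_j \wt(T_j) = 0$ in Lemma \ref{lemma-distingchar}, and the isolation of the $1$-parameter family $\lambda_1 \in \lambda_{\min}(T) + \Par$ whose additivity $\lambda_i(T,\delta) = \lambda_i(T) + \delta$ (Lemma \ref{lemma-lambdaiadditive}) is precisely what permits $q^{k|\delta|}$ to factor out of the double sum. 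With all those ingredients in place, the cancellation against Euler's product is automatic and the theorem follows.
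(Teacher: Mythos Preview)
Your proposal is correct and follows exactly the paper's own approach: the paper states that the theorem is immediate from the separation-of-variables lemma by cancelling $\sum_{\delta \in \Par} q^{k|\delta|}$ against $1/\prod_{i=1}^\infty(1-q^{ki})$. You have simply made the Euler identity underlying that cancellation explicit, which is a helpful elaboration but not a different argument.
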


\printbibliography
    
 \end{document}